\documentclass[11pt]{article}
\usepackage[utf8]{inputenc}
\input{preamble.sty}

\title{A No-go Theorem for Acceleration in the Hyperbolic Plane}
\author{Linus Hamilton\thanks{Department of Mathematics, Massachusetts Institute of Technology. Email: {\tt luh@mit.edu}. This work was
supported in part by a Fannie and John Hertz Foundation Fellowship. } \and Ankur Moitra\thanks{Department of Mathematics, Massachusetts Institute of Technology. Email: {\tt moitra@mit.edu}. This work was
supported in part by a Microsoft Trustworthy AI Grant, NSF CAREER Award CCF-1453261, NSF Large CCF1565235, a David and Lucile Packard Fellowship, an Alfred P. Sloan Fellowship and an ONR Young Investigator
Award.}}
\date{\today }

\begin{document}

\maketitle

\begin{abstract}
In recent years there has been significant effort to adapt the key tools and ideas in convex optimization to the Riemannian setting. One key challenge has remained: Is there a Nesterov-like accelerated gradient method for geodesically convex functions on a Riemannian manifold? Recent work has given partial answers and the hope was that this ought to be possible. 

Here we dash these hopes. 
We prove that in a noisy setting, there is no analogue of accelerated gradient descent for geodesically convex functions on the hyperbolic plane. Our results apply even when the noise is exponentially small. The key intuition behind our proof is short and simple: {\em In negatively curved spaces, the volume of a ball grows so fast that information about the past gradients is not useful in the future.}
\end{abstract}

\newpage

\section{Introduction}

Convex optimization \cite{nesterov2013introductory} undergirds much of machine learning, theoretical computer science, operations research and statistics. Geodesically convex optimization is a natural generalization that replaces Euclidean space with a Riemannian manifold and we require that the function we want to minimize  is convex along geodesics \cite{udriste2013convex,absil2009optimization, bacak2014convex}. It turns out that many optimization problems of interest, while non-convex in the Euclidean view, become geodesically convex when equipped with the right geometry. 
Some notable examples: The fastest known algorithms for computing Brascamp-Lieb constants \cite{garg2018algorithmic, allen2018operator}, and solving related problems like the null cone problem \cite{burgisser2017alternating, burgisser2018efficient, burgisser2019towards}, exploit geodesic convexity. In machine learning, it arises in matrix completion \cite{cambier2016robust, tan2014riemannian, vandereycken2013low}, dictionary learning \cite{cherian2016riemannian, sun2016complete}, robust subspace recovery \cite{zhang2012robust}, mixture models \cite{hosseini2015matrix} and optimization under orthogonality constraints \cite{edelman1998geometry}. In statistics, some basic problems like estimating the shape of an elliptical distribution \cite{wiesel2015structured,franks2020rigorous} or estimation matrix normal models \cite{tang2018integrated, amendola2020invariant} are best viewed through the lens of geodesic convexity. 

In recent years there has been significant effort to adapt the key tools and ideas in convex optimization to the Riemannian setting. This includes giving new deterministic \cite{zhang2016first}, stochastic \cite{khuzani2017stochastic, tripuraneni2018averaging}, variance-reduced \cite{sato2019riemannian, zhang2016riemannian}, projection-free \cite{weber2019nonconvex}, adaptive \cite{kasai2019riemannian} and saddle-point escaping \cite{criscitiello2019efficiently, sun2019escaping} first-order methods. Many new ingredients are needed because the traditional analyses in the Euclidean setting rely on the linear structure. Still, one of the key challenges has remained elusive thus far:

\begin{quote}
    {\em Is there a Nesterov-like accelerated gradient method for geodesically convex functions on a Riemannian manifold?}
\end{quote}

\noindent This question is particularly natural in settings where the curvature is non-positive, since it inherits many useful properties of Euclidean space such as having unique geodesics between any pair of points. There has been notable partial progress. Zhang and Sra \cite{zhang2018estimate} were among the first to clearly articulate this question. They gave a method that achieves Nesterov-like acceleration {\em if you start sufficiently close to the optimum}. Since then, the aim has been to develop methods that achieve {\em global} acceleration. Ahn and Sra \cite{ahn2020nesterov} gave a partial answer by giving a method that converges strictly faster than gradient descent and eventually accelerates. Mart\'{i}nez-Rubio \cite{martinez2020acceleration} gave a method that achieves global acceleration but at the expense of having hidden constants that depend exponentially on the diameter of the space. 

In this work, our main contribution is to dash these hopes by showing that acceleration is impossible even in the simplest of settings where we want to minimize a smooth and strongly geodesically convex function over the hyperbolic plane. Our proof assumes that the gradient oracle returns an answer that has just an exponentially small amount of noise. In comparison, in the Euclidean setting it is possible to achieve Nesterov-like acceleration with an inverse polynomial amount of noise. Of course, in realistic settings some amount of noise is usually unavoidable. 

\begin{mainthm*}
Given access to a $\delta$-noisy gradient oracle, any algorithm for finding a point within distance $r/5$ of the minimum of a $1$-strongly convex and $O(r)$-smooth function in the hyperbolic plane that succeeds with probability at least $2/3$ must make at least
$$\Omega\left(\frac{r}{\log r + \log 1/\delta}\right)$$ queries in expectation. Here $r$ is a bound on how far the optimum is from the origin. 
\end{mainthm*}

See Theorem~\ref{thm:main} for the full version. The key intuition is short and simple: {\em In negatively curved spaces, the volume of a ball grows so fast that information about the past gradients is not useful in the future.} Indeed for discrete approximations to the hyperbolic plane, like a $4$-regular tree, it is not hard to make this intuition precise. This intuition also helps clarify why existing acceleration results need to assume that you are already within a constant neighborhood of the optimum or depend badly on the radius. 

It is more challenging to reason about general algorithms that can make queries anywhere they like and not just at a discrete set of locations. The proof of our main result is based on an abstraction in terms of a game where a player wants to find a hidden item in a set and can make queries to a noisy oracle. We prove an information-theoretic lower bound for this game, and then connect it back to convex optimization over curved spaces. The key point is that balls in hyperbolic space grow exponentially with their radius, and this is in turn related to the size of the set in our game. We believe that our query lower bounds for the noisy oracle game can be more broadly applicable, even beyond the realm of optimization. 


This paper is structured as follows: in Section \ref{sec:2} we introduce the reader to the counterintuitive properties of the hyperbolic plane. Section \ref{sec:3} defines the noisy gradient problem and phrases it as a more general `noisy query game'. Section \ref{sec:4} proves our crucial lower bound on general noisy query problems, and then applies it to prove our no-go theorem about acceleration in curved spaces.

\section{The Hyperbolic Plane}
\label{sec:2}

This section serves as an introduction to the hyperbolic plane $\mathbb{H}^2$, establishing the important facts we use for our proof as well as intuition for our main result. The first and most important fact about the hyperbolic plane is that it is large:

\begin{fact}\cite{cannon1997hyperbolic}
 The circumference and area of a hyperbolic circle are both exponential in its radius.
\end{fact}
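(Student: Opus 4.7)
The plan is to work in geodesic polar coordinates $(r,\theta)$ centered at the center of the circle and reduce everything to a one-dimensional integral. In a surface of constant curvature $-1$, the metric in such coordinates takes the form $ds^2 = dr^2 + \sinh^2(r)\, d\theta^2$. Once this is in hand, the circumference and area are immediate.

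First I would fix a model of $\mathbb{H}^2$ (say the Poincar\'e disk or upper half-plane) and recall that the sectional curvature is identically $-1$. Next I would derive the polar form of the metric by the standard Jacobi field argument: along a unit-speed radial geodesic emanating from the center, a perpendicular Jacobi field $J$ with $J(0)=0$ and $J'(0)=1$ satisfies $J''(r) + K\, J(r) = 0$ with $K=-1$, giving $J(r) = \sinh(r)$. This $J(r)$ is exactly the factor by which infinitesimal angular displacements at the origin are scaled at radius $r$, which yields $ds^2 = dr^2 + \sinh^2(r)\, d\theta^2$. (Alternatively, one can verify this directly in a concrete model by pulling back the metric under the exponential map at a basepoint.)

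Given the polar form of the metric, the boundary circle of radius $r$ is parameterized by $\theta \mapsto (r,\theta)$ for $\theta \in [0,2\pi)$, so its length is
\begin{equation*}
C(r) \;=\; \int_0^{2\pi} \sinh(r)\, d\theta \;=\; 2\pi\sinh(r).
\end{equation*}
The enclosed area integrates the volume form $\sinh(s)\, ds\, d\theta$ over $s\in[0,r]$, $\theta\in[0,2\pi)$, giving
\begin{equation*}
A(r) \;=\; 2\pi\bigl(\cosh(r) - 1\bigr).
\end{equation*}
Since $\sinh(r)$ and $\cosh(r)$ are both $\Theta(e^{r})$ as $r\to\infty$, both quantities grow exponentially in $r$, which is exactly the claim.

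The only non-routine step is deriving the $\sinh(r)$ factor in the polar metric; everything else is a one-line integration. This step is a standard consequence of the Jacobi equation for constant curvature and can be cited from any Riemannian geometry text (e.g., do Carmo). So I do not anticipate a genuine obstacle — the real content is the exponential growth of $\sinh$, which is the source of the ``balls grow too fast'' intuition driving the rest of the paper.
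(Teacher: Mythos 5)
Your derivation is correct: the paper does not prove this fact but simply cites Cannon--Floyd--Kenyon--Parry, whose standard formulas $C(r)=2\pi\sinh r$ and $A(r)=2\pi(\cosh r-1)$ are exactly what you obtain from the polar metric $ds^2=dr^2+\sinh^2(r)\,d\theta^2$. So your argument is the standard one behind the cited fact, and there is nothing to add.
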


To showcase this we present a picture of the hyperbolic plane tiled with congruent equilateral pentagons. As you can see, the number of pentagons at distance $r$ from the origin grows exponentially in $r$.

\begin{center}
\includegraphics[width=200pt]{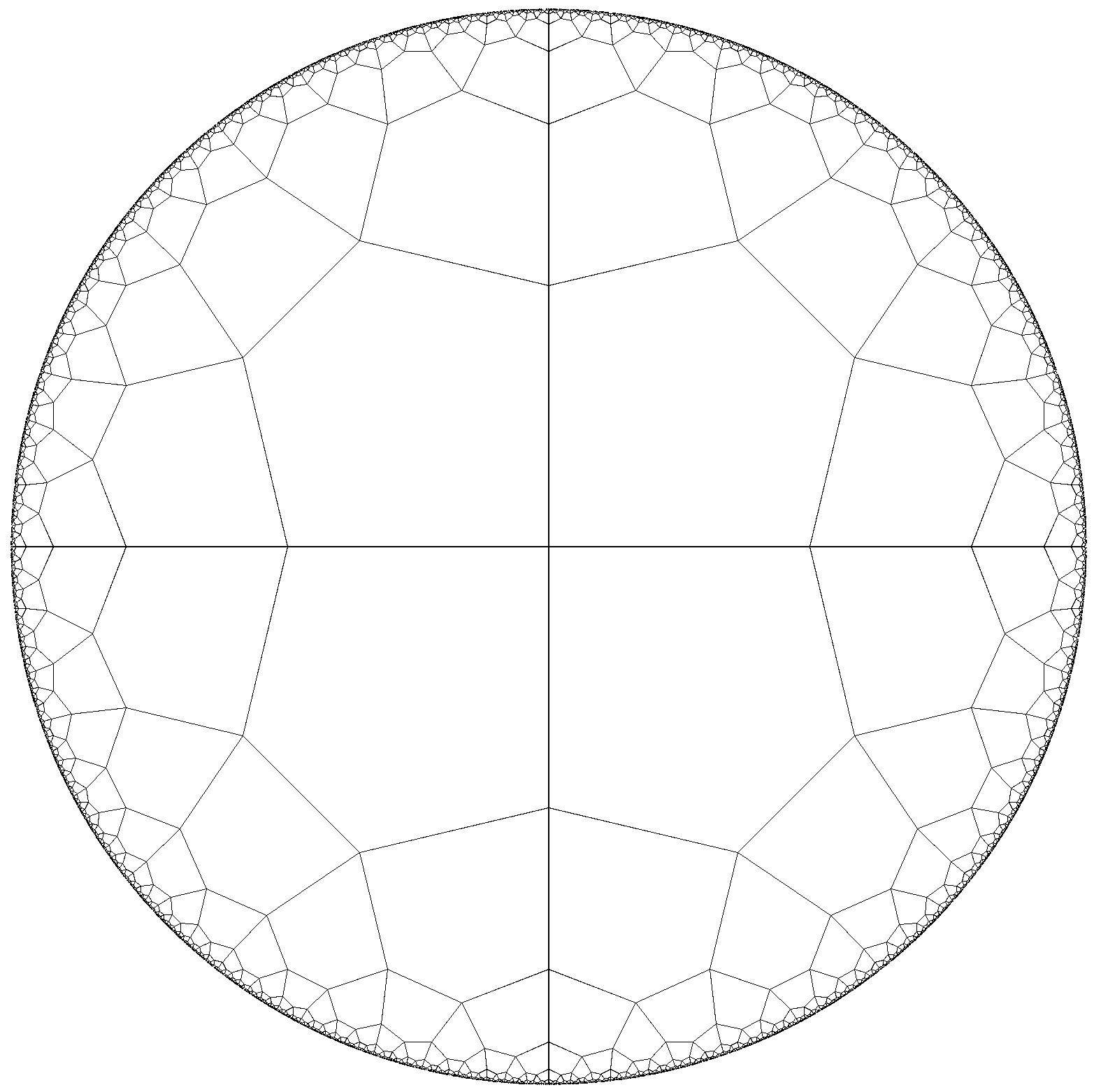}
\end{center}

This gives intuition for our result: When attempting to minimize a function whose minimum lies somewhere within a ball of radius $r$, the hyperbolic plane forces you to search over a much larger area than any fixed dimension of Euclidean space would. This inherently makes it harder to exploit information from past queries about the function value and gradient, as you can in acceleration in the Euclidean case, when we are interested in reaching a point far away from the origin. 

\subsection{Convexity}
\label{sec:convexity}

\begin{definition}\cite{356224, absil2009optimization, udriste2013convex}
A function from a manifold (here, the hyperbolic plane $\mathbb{H}^2$) to $\mathbb{R}$ is geodesically convex if it is convex along any geodesic. This is equivalent to the Hessian of the function having nonnegative eigenvalues. We say that a function is $\alpha$-strongly convex if the eigenvalues of its Hessian are bounded below by $\alpha$. Moreover it is $\beta$-smooth if the eigenvalues of its Hessian are bounded above by $\beta$.
\end{definition}

Note that even though a function $f:\mathbb{H}^2\to\mathbb{R}$ may live in the hyperbolic plane, its gradient at any point, denoted by $\Delta f(x)$, lives in a space isomorphic to $\mathbb{R}^{2}$. And similarly its Hessian at that point is an ordinary $2 \times 2$ matrix. This is because the gradient and Hessian are local properties of a function, and locally the hyperbolic plane looks Euclidean. This can be made formal through the concept of tangent spaces, but will not be needed for our purposes. 

In this paper we will prove lower bounds for minimizing arguably the simplest geodesically convex function, the distance squared function:

\begin{fact}
In the hyperbolic plane, the distance squared function $x\mapsto d(x,x^{\star})^{2}$ is geodesically convex and its minimum is $x^{\star}$. At distance $r$ from $x^{\star}$, this function is 1-strongly convex and $(r/\tanh{r})$-smooth.
\end{fact}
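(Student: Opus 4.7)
My plan is to work in geodesic polar coordinates centered at $x^\star$. In the hyperbolic plane of curvature $-1$, these coordinates express the metric as $ds^2 = dr^2 + \sinh^2(r)\,d\theta^2$, and the function $f(x) = d(x,x^\star)^2$ becomes simply $f(r,\theta)=r^2$. In particular $f \ge 0$ with equality iff $r=0$, so once geodesic convexity is established it follows automatically that $x^\star$ is the unique minimizer.

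The core step is computing $\mathrm{Hess}(f)$ at a point $x$ at distance $r > 0$ from $x^\star$. By the rotational symmetry of both the metric and $f$ about $x^\star$, the Hessian is diagonal in the orthonormal basis $\{e_r, e_\theta\}$ of unit radial and tangential vectors, so I only need to compute its two diagonal entries; for a unit vector $v$, $\mathrm{Hess}(f)(v,v)$ equals $\tfrac{d^2}{db^2}\, f(\exp_x(bv))\big|_{b=0}$. Along the radial geodesic through $x$, $f$ restricts to $b \mapsto (r+b)^2$, so the radial entry is $2$. For the tangential direction I would invoke the hyperbolic law of cosines: if $x'$ sits at distance $b$ from $x$ in a direction making angle $C$ with the outward radial ray, then
\[
\cosh d(x', x^\star) \;=\; \cosh r\,\cosh b \;-\; \sinh r\,\sinh b\,\cos C.
\]
Setting $C = \pi/2$ collapses this to $\cosh d(x',x^\star) = \cosh r\,\cosh b$, and a direct two-derivative calculation at $b=0$ (the first derivative vanishes there) yields the tangential eigenvalue $2r\coth r = 2r/\tanh r$.

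With both eigenvalues in hand the rest is immediate: both are strictly positive (since $r/\tanh r \ge 1$ on $[0,\infty)$), so for any unit-speed geodesic $\gamma$ we have $(f\circ\gamma)''(t_0) = \mathrm{Hess}(f)(\gamma'(t_0),\gamma'(t_0)) \ge 2 > 0$. This establishes geodesic convexity and, combined with the preceding paragraph, confirms that $x^\star$ is the unique minimizer. The two Hessian eigenvalues are $2$ and $2r/\tanh r$, which (after the customary $\tfrac12$ normalization of a squared-distance objective) matches the $1$-strong convexity and $(r/\tanh r)$-smoothness in the statement. The only nontrivial step in the whole plan is the tangential Hessian calculation; one could alternatively derive it from the Jacobi-field identity that a radial geodesic variation in $\mathbb{H}^2$ with orthogonal unit initial velocity has norm $\sinh t$ at time $t$, which is exactly the curvature-$-1$ input that feeds the law of cosines.
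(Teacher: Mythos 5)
Your argument is correct, and it is genuinely more self-contained than what the paper does: the paper simply cites the Hessian formula for the Riemannian squared distance from Ferreira--Xavier \cite{ferreira2006hessian}, whereas you rederive the two eigenvalues from scratch. Your route --- rotational symmetry to diagonalize the Hessian in the $\{e_r,e_\theta\}$ frame, the trivial radial computation $b\mapsto(r+b)^2$, and the hyperbolic law of cosines with $C=\pi/2$ giving $\cosh d = \cosh r\cosh b$, hence tangential second derivative $2r\coth r$ --- is exactly the curvature-$(-1)$ special case of the general formula the paper imports, so nothing is lost, and the reader gains an elementary verification (the Jacobi-field remark you make is precisely why the citation's formula holds on general locally symmetric spaces, which is what the citation buys in exchange for brevity). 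One point worth being explicit about: your eigenvalues $2$ and $2r/\tanh r$ are the literal Hessian of $d(x,x^\star)^2$, while the stated constants $1$ and $r/\tanh r$ correspond to $\tfrac12 d^2$; this factor-of-$2$ normalization slippage is already present in the paper itself (it later asserts ``the eigenvalues of the Hessian are $1$ and $r/\tanh r$''), and you handle it honestly, but note that strictly speaking a $2r\coth r$-smooth function is not $(r\coth r)$-smooth --- harmless here since only the orders $\Omega(1)$ and $O(r)$ enter the main theorem, but if you want the fact verbatim you should state it for $\tfrac12 d^2$ or carry the $2$'s. Also, your convexity conclusion tacitly uses continuity of the Hessian at $r=0$ (where the eigenvalues limit to $2$), which is fine because $d^2$ is smooth everywhere on the hyperbolic plane, but a sentence acknowledging the center point would make the geodesic-convexity claim airtight.
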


The strong convexity and smoothness come from the formula for its Hessian given in \cite{ferreira2006hessian}.

\subsection{Why Pirates Don't Search for Treasure in the Hyperbolic Plane}

The purpose of this subsection is to provide intuition for our main theorem and is not required to understand our results. Imagine a pirate who has buried treasure in the desert somewhere at distance 100 away from her. She does not remember exactly where the treasure is, but is in possession of a compass which points towards it. The compass' reading has error, though: on the order of $10^{-16}$ degrees. (This setting is analogous to an algorithm able to make noisy queries to the gradient of some function.)
In the Euclidean plane, the pirate could easily find the treasure: take a compass bearing, walk 100 steps, and dig. An error of $10^{-16}$ degrees would literally be subatomic.

However, if the pirate attempted this strategy in the hyperbolic plane, she would end up at distance just over 190 from her treasure. She would have started to walk away from the treasure after just a few steps! (Specifically, a constant number of steps that scales with $\log(1/10^{-16})$.) Therefore, she would have to repeatedly look at her compass every few steps in order to make progress towards the treasure.
This is why gradient descent has only linear convergence in hyperbolic spaces. Of course, this falls short of explaining why no algorithm converges faster.
One of the authors (though we won't say who) is indebted to the video game HyperRogue \cite{hyperrogue} for giving intuition about the hyperbolic plane. One level of this game features a similar scenario with pirates and compasses.

\global\long\def\O{\mathcal{X}}
\global\long\def\i{i^{\star}}

\section{Optimization with a Noisy Gradient Oracle}
\label{sec:3}



In this section we define the main model we will be interested in. Moreover we recall convergence bounds in the Euclidean case, particularly those that continue to hold in the presence of a small amount of noise, as a point of comparison. 

\begin{definition}
The radius-$r$ gradient optimization model is as follows: There is an unknown differentiable function $f$ whose minimum is within distance $r$ of the origin. An algorithm may query points $x$ within distance $1000r$ of the origin.\footnote{The number 1000 is arbitrary and only affects constants inside big-O notation. The reason for this assumption is as follows: if an algorithm were to query a point $x$ superexponentially far away from the origin and learn $f(x)+\text{noise}$, then $f(x)$ would be so large that the noise term could be disregarded completely. Thus, removing this assumption would require a more refined noise model, such as multiplicative noise.} Upon querying $x$, the algorithm learns $f(x)$ and the gradient $\Delta f(x)$.
\end{definition}

\begin{definition}
In the noisy version of the model, instead of learning the exact values of $f(x)$ and $\Delta f(x)$, the algorithm receives $f(x)+z_1$ and $\Delta f(x)+z_2$ where $z_1$ and $z_2$ are noise. We do not require the noise to be of a specific form such as Gaussian, uniform, etc -- we only require that the noise terms for different queries are independent.
\end{definition}

\begin{definition}
We say noise is \emph{$c$-non-concentrated} if on any query, the probability distribution function of the noise is everywhere bounded above by $c$. We say noise is \emph{$C$-precise} if the noise term never has magnitude larger than $C$.
\end{definition}

In Euclidean space, a small amount of noise does not preclude acceleration. As a point of comparison, we restate the key result (Theorem 7) from \cite{devolder2013first}:

\begin{thm}[from \cite{devolder2013first}]

There is an algorithm that, given $C$-precise noisy oracle access to an $L$-smooth $\mu$-strongly convex function $f$ and its gradient, along with a starting point at distance $d$ from the minimum of $f$, outputs after $k$ oracle queries a point $x_k$ such that

\[f(x_k) - \min f \leq O \left(
Ld\cdot \min \left( 1/k^2, \exp(-k/2 \sqrt{\mu/L}) \right)
+ \min \left( k\cdot \text{poly}(C), \sqrt{L/\mu} \right)
\right)
\]

\end{thm}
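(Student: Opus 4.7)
The plan is to invoke Nesterov's accelerated gradient method and then carefully track how the oracle noise accumulates across iterations. The first step is to recast the $C$-precise noisy oracle in the language of an \emph{inexact first-order oracle}: at each queried point $x$, the noisy value/gradient pair induces an affine lower model for $f$ whose slack relative to the true underestimator is bounded by some $\delta$ depending on $C$ (and on $L$, $\mu$). A short computation using $L$-smoothness together with the $C$-bound on the noise gives $\delta = O\bigl(C + C^{2}/\mu\bigr)$ or a similar polynomial in $C$.

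Next, run $k$ iterations of Nesterov's accelerated method against this inexact oracle. The noiseless estimating-sequence argument yields
\[
f(x_k) - \min f \;\leq\; O\!\Bigl(L d \cdot \min\bigl(1/k^{2},\ \exp(-k\sqrt{\mu/L}/2)\bigr)\Bigr).
\]
When the oracle is inexact, each iteration picks up an extra $O(\delta)$ term in the estimating-sequence recursion, and the aggressive momentum weights cause these errors to sum linearly in $k$, contributing $O\bigl(k \cdot \text{poly}(C)\bigr)$ to the final function-value error. To avoid this blow-up when $k$ is large, one instead runs plain gradient descent with step size $1/L$: each step is a contraction by $1-\mu/L$ toward $\arg\min f$ up to an additive perturbation of order $C/L$, so the iterates converge to a neighborhood of the optimum of size $O(C/\mu)$, which translates to a function-value error of order $\sqrt{L/\mu} \cdot \text{poly}(C)$ that is \emph{independent} of $k$. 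Taking, at each $k$, whichever of the two schemes gives the better guarantee produces the minima displayed in the theorem.

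The main obstacle is the bookkeeping inside the accelerated analysis: one must show that the inexact-oracle slack $\delta$ enters the recursion with coefficients whose sum is only $O(k)$, rather than $\Omega(k^{2})$ or exponential in $k$. This is the sharp content of Devolder et al.'s inexact-oracle framework, and it is essentially tight: their matching lower bound shows that any accelerated method on smooth strongly convex functions must suffer linear-in-$k$ noise amplification. Qualitatively this is also why the present paper's negative result is striking: on the hyperbolic plane, exponentially small noise already suffices to rule out the fast-but-fragile acceleration regime that the Euclidean theorem above barely preserves.
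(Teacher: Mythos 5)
First, a point of reference: the paper does not prove this statement at all --- it is quoted (Theorem 7) from Devolder--Glineur--Nesterov \cite{devolder2013first} --- so the relevant comparison is with their inexact-oracle analysis, which is exactly the framework your first step sets up. Recasting $C$-precise noise as an inexact first-order oracle with slack $\delta=\poly(C)$ is indeed the right reduction. The genuine gap is in how you produce the second term of the bound. You assert that the accelerated scheme must accumulate error linearly in $k$, and you patch this by switching to plain gradient descent for large $k$ and taking the better of the two guarantees. That combination does not deliver the stated bound: the theorem requires, for the \emph{same} $k$, the accelerated decrease term $Ld\cdot\min(1/k^{2},\exp(-(k/2)\sqrt{\mu/L}))$ together with a noise term that saturates at roughly $\sqrt{L/\mu}\cdot\poly(C)$. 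In the intermediate regime $\sqrt{L/\mu}\ll k\ll (L/\mu)\log\bigl(Ld^{2}/\poly(C)\bigr)$, your accelerated branch pays $k\cdot\poly(C)\gg\sqrt{L/\mu}\cdot\poly(C)$, while your gradient-descent branch still carries a first term of order $Ld^{2}\exp(-k\mu/L)$, which has not yet decayed; so the minimum of your two guarantees exceeds the claimed right-hand side by a factor on the order of $k/\sqrt{L/\mu}$, which is unbounded.

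The missing idea is that for strongly convex $f$ a \emph{single} inexact accelerated scheme already has saturating, not linear, error accumulation: because the strongly convex estimate-sequence argument (equivalently, restarting the fast method every $\Theta(\sqrt{L/\mu})$ iterations) contracts geometrically, the per-iteration $\delta$-terms enter with geometrically decaying weights whose sum is $O\bigl(\min(k,\sqrt{L/\mu})\bigr)\cdot\delta$. That is precisely what yields $\min\bigl(k\cdot\poly(C),\ \sqrt{L/\mu}\cdot\poly(C)\bigr)$ alongside the accelerated first term, and it is the sharp content of the cited Theorem 7. Relatedly, your appeal to a matching lower bound showing ``linear-in-$k$ amplification'' pertains to the non-strongly-convex setting; in the strongly convex case the amplification is $\Theta(\sqrt{L/\mu})$, so no switch to unaccelerated gradient descent is needed or helpful. (A secondary issue: from a neighborhood of radius $O(C/\mu)$ the function-value error is of order $LC^{2}/\mu^{2}$, not $\sqrt{L/\mu}\cdot\poly(C)$ as written; this bookkeeping would also need to be fixed, but it is minor compared to the structural point above.)
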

(The bound in the original paper is more precise, making big-O constants explicit and using a more refined notion of precision.) This theorem implies that accelerated gradient descent works in Euclidean space even in the presence of noise, provided that the noise has magnitude at most some inverse polynomial in $r$. Contrast our main result: in hyperbolic space, accelerated gradient descent is impossible even with \textit{exponentially} small noise.


\section{The Noisy Gradient Task in the Hyperbolic Plane}
\label{sec:4}

Recall, we will be interested in the simplest example of a convex function in the hyperbolic plane: $f(x):=\mbox{dist}(x,x^{\star})^{2}$,
where $x^{\star}$ is some point at distance $r$ from the origin. Finding the minimum of $f$ is equivalent to locating $x^{\star}$.
Without noise, an algorithm could locate $x^{\star}$ exactly in \emph{one} query,
because any gradient is guaranteed to both point exactly at $x^{\star}$ and indicate the distance to $x^{\star}$. What about with noisy gradients?

Within the hyperbolic  of radius $r$ centered at the origin, our function $f$ is $O(r)$-smooth and 1-strongly convex. (As an aside, in Theorem~\ref{thm:condition} we show that for any $\beta$-smooth and $\alpha$-strongly convex function in the hyperbolic disk we must have $\beta/\alpha = \Omega(r)$).  If Nesterov-like acceleration in the hyperbolic plane were possible we should be able to locate $x^{\star}$ to within distance 1 in
time $O(\sqrt{r})$.
Unfortunately, as we will show, this task is impossible. Even worse, it is impossible to get any polynomial factor speedup in the convergence. 

\begin{thm}
\label{thm:main}
In the radius-$r$ noisy gradient optimization model in the hyperbolic plane, if queries receive noisy answers with $c$-non-concentrated $C$-precise noise, then any algorithm that can find a point within distance $r/5$ of the minimum of the function at succeeds with probability at least $2/3$ must make at least $$\Omega\left(\frac{r}{\log r + \log C +\log c}\right)$$ queries. This is true even if the function is guaranteed to be $1$-strongly convex and $O(r)$-smooth at every point within distance $r$ from the origin.
\end{thm}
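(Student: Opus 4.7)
The plan is to reduce the theorem to an abstract noisy query game and finish via Fano's inequality, along the lines signposted in the introduction.

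First, I would exploit exponential volume growth in $\mathbb{H}^2$ to pack the disk of radius $r$ with $N = \exp(\Omega(r))$ candidate optima $\{x_1^*, \ldots, x_N^*\}$ whose pairwise distances all exceed $2r/5$. Such a packing exists because $\mathrm{Vol}(B_r)/\mathrm{Vol}(B_{2r/5}) = \exp(\Omega(r))$. Define the candidate functions $f_i(x) := d(x, x_i^*)^2$; by the Hessian formula from \cite{ferreira2006hessian} cited in Section~\ref{sec:2}, each $f_i$ is $1$-strongly convex and $O(r)$-smooth on the disk of radius $O(r)$. Because the $r/5$-balls around the $x_i^*$ are pairwise disjoint, any algorithm that locates $\arg\min f_{i^*}$ to within distance $r/5$ with probability $\ge 2/3$ also identifies a uniformly drawn index $i^*$ with the same probability.

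Second, I would bound the information revealed by a single query. Fix a query point $x$ in the disk of radius $1000r$. The oracle returns $A = (f_{i^*}(x) + z_1,\ \nabla f_{i^*}(x) + z_2) \in \mathbb{R}^3$. Since $d(x, x_i^*) = O(r)$ we have $f_i(x) = O(r^2)$ and $|\nabla f_i(x)| = O(r)$, so for any fixed $i$ the answer sits in a set of volume $V = O((r^2 + C)(r + C)^2)$; $C$-precision bounds the support and $c$-non-concentration forces the conditional density $p(a \mid i^*) \le c$ pointwise (since $A$ is just a shift of the noise). Using $H(A) \le \log V$ for distributions of bounded support and $H(A \mid i^*) \ge -\log c$ for distributions of bounded density,
\[
I(A;\, i^*) \;\le\; H(A) - H(A \mid i^*) \;\le\; \log V + \log c \;=\; O(\log r + \log C + \log c).
\]
Chaining this across the $T$ (adaptive) queries via the chain rule for mutual information — the per-query bound is pointwise in $x$ and so carries through conditionally on the transcript so far — gives $I(A_1, \ldots, A_T;\, i^*) \le T \cdot O(\log r + \log C + \log c)$. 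Since $i^*$ is uniform on a set of size $\exp(\Omega(r))$, Fano's inequality applied to any estimator that recovers $i^*$ with probability $\ge 2/3$ forces $I \ge \Omega(r)$. Rearranging yields $T = \Omega(r/(\log r + \log C + \log c))$.

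The main obstacle is handling adaptivity cleanly: the query point $x_t$ depends on earlier noisy answers, which are themselves correlated with $i^*$, so the per-query entropy bound has to be applied conditionally on the entire prior transcript rather than marginally. The cleanest route (and the reason the introduction pushes the abstract \emph{noisy query game} formulation) is to prove the information-theoretic lower bound once in the general adaptive setting — a player searching for a hidden item with queries to an oracle whose answer distributions all have bounded support and bounded density — and then instantiate it for the hyperbolic optimization instance. The packing construction and the verification of $1$-strong convexity and $O(r)$-smoothness are essentially routine given the facts already recorded in Section~\ref{sec:2}.
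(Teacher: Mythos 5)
Your proposal is correct, and your reduction is essentially the paper's: it packs $e^{\Theta(r)}$ candidate optima with pairwise separation $\Theta(r)$ into the radius-$r$ disk (possible by exponential volume growth), takes the squared-distance functions as the candidate objectives, and observes that every noisy answer lands in a region of $\mathbb{R}^3$ of volume $\mathrm{poly}(r,C)$. Where you genuinely diverge is in how the abstract query lower bound is proved. The paper does not invoke Fano or mutual information; it introduces a ``transparent'' noisy query game in which the oracle reveals a uniformly random point under the graph of the answer density, so that the player's posterior stays uniform over a shrinking set of surviving candidates, and a Jensen-inequality computation (Lemma~\ref{lem:potential}) shows that each query decreases the logarithm of the number of survivors by at most $\log\left(c|\mathcal{X}|\right)$ in expectation; Markov's inequality then yields Theorem~\ref{thm:querygame}. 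Your route--bounding the per-query conditional mutual information by $\log V+\log c$ using bounded support and $c$-bounded conditional density, chaining over the transcript, and closing with Fano--is the standard information-theoretic packaging of exactly the same per-query quantity. It buys automatic handling of adaptivity via the chain rule and needs no auxiliary game, whereas the paper's argument is more elementary (no differential entropy, only Jensen and Markov) and, because it tracks a potential and applies Markov, extends directly to a bound on the expected number of queries when the query count is a stopping time; with Fano you would add a Markov step or a stopping-time variant to get that form. Either way the per-query bound $O(\log r+\log C+\log c)$ against the $\Omega(r)$ bits needed to identify the hidden optimum gives the stated result.
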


To prove this result, we will generalize the noisy gradient model to any setting in which an agent makes queries and receives probabilistic answers over a discrete set of possibilities. In this general setting, we will prove a lower bound on the number of queries
needed to determine the state of the world.

\subsection{Noisy Query Games}
\begin{definition}
A \emph{noisy query game} is a tuple $(n,Q,\O,f)$, with which the
following one-player game is played: A secret number $\i$ is chosen uniformly at random from $\{1,2,\dots,n\}$. The player's goal is to determine $\i$. To do so, the player may make a query $q\in Q$
and receive an observation $X\in\O$. The observation is sampled using
some probability distribution function $f_{q,\i}(x)$.
The player 
wins when they can guess $\i$ with probability at least $2/3$.
\end{definition}
We remark that for us $\mathcal{X}$ will be a region in Euclidean space and we will use $|\mathcal{X}|$ to denote its volume. The noisy game broadly seems to be a natural model for a class of noisy learning tasks. In particular, it generalizes the noisy gradient task in the hyperbolic plane, as we show in the following comment:

We now state our lower bound for noisy query games:

\begin{thm}
\label{thm:querygame}
In a noisy query game $(n,Q,\O,f)$, suppose the noise is $c$-non-concentrated,
i.e. all probability distribution functions $f_{q,\i}$ are everywhere
bounded above by some constant $c$. Then for the player to be able to guess $\i$ with probability at least $2/3$, the player must make 
at least $\Omega(\frac{\log n}{\log\left(c|\mathcal{X}|\right)})$ queries.
\end{thm}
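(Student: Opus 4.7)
The plan is a standard mutual-information argument: use Fano's inequality to convert the success guarantee into a lower bound on $I(\i;X_1,\ldots,X_k)$, decompose this total by the chain rule, and bound the information contributed by a single query by $\log(c|\mathcal{X}|)$ using two elementary differential-entropy estimates that exploit the density cap $f_{q,\i}\le c$ and the fact that every observation lies in $\mathcal{X}$.

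In detail, I would fix an algorithm that makes $k$ (possibly adaptive) queries $q_1,\ldots,q_k$, receives observations $X_1,\ldots,X_k$, and outputs a guess that equals $\i$ with probability at least $2/3$. Since $\i$ is uniform on $\{1,\ldots,n\}$, Fano's inequality gives $H(\i\mid X_1,\ldots,X_k) \le O(1)+\tfrac{1}{3}\log n$, so
\[
I(\i;X_1,\ldots,X_k)\ \ge\ \tfrac{2}{3}\log n - O(1).
\]
The chain rule then expands this as $\sum_{j=1}^{k} I(\i;X_j\mid X_1,\ldots,X_{j-1})$, reducing the task to a per-query information bound.

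To bound a single term, I would write, with $q_j$ the (deterministic function of $X_1,\ldots,X_{j-1}$) query,
\[
I(\i;X_j\mid X_1,\ldots,X_{j-1}) \ =\ h(X_j\mid X_1,\ldots,X_{j-1})\ -\ h(X_j\mid \i,X_1,\ldots,X_{j-1}).
\]
For the first term, $X_j$ always lies in $\mathcal{X}$, so conditionally on any realization of the past it has differential entropy at most $\log|\mathcal{X}|$ (the uniform distribution maximizes entropy on a bounded support); averaging gives $h(X_j\mid X_1,\ldots,X_{j-1})\le\log|\mathcal{X}|$. For the second term, conditionally on $\i$ and the past, $X_j$ has density $f_{q_j,\i}$ which is pointwise at most $c$, so $-\log f_{q_j,\i}(x)\ge -\log c$ everywhere and hence $h(X_j\mid \i,X_1,\ldots,X_{j-1})\ge -\log c$. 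Subtracting yields $I(\i;X_j\mid X_1,\ldots,X_{j-1}) \le \log(c|\mathcal{X}|)$, and combining with the Fano bound gives $k=\Omega(\log n/\log(c|\mathcal{X}|))$.

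The only subtlety I anticipate is ensuring the entropy bounds survive conditioning on the random past; this is immediate because both inequalities hold pointwise for every realization of the conditioning variables, so averaging them against the joint distribution preserves them. Beyond this bookkeeping I do not foresee any essential obstacle, and the same argument handles algorithms with random (rather than fixed) query count by a standard truncation at, say, $3\E[k]$ steps.
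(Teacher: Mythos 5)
Your proof is correct, but it takes a genuinely different route from the paper. You run the textbook information-theoretic argument: Fano's inequality converts the $2/3$ success probability into $I(i^{\star};X_1,\dots,X_k)\ge \tfrac23\log n-O(1)$, the chain rule splits this into per-query terms, and each term is capped by $\log(c|\mathcal{X}|)$ via the two differential-entropy bounds $h(X_j\mid \text{past})\le\log|\mathcal{X}|$ (bounded support) and $h(X_j\mid i^{\star},\text{past})\ge-\log c$ (density at most $c$); both bounds hold pointwise in the conditioning, so adaptivity and the algorithm's internal randomness are harmless, and your truncation at $3\E[k]$ handles a random query count at the cost of constants. The paper instead avoids mutual information altogether: it introduces a ``transparent'' variant of the game in which the observation is a uniform point under the graph of the density, so the posterior stays uniform on a shrinking set of surviving options, and a Jensen-inequality computation (Lemma~\ref{lem:potential}) shows each query reduces the logarithm of the number of surviving options by at most $\log(c|\mathcal{X}|)$ in expectation; Markov's inequality then finishes the argument. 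The two proofs bound the same quantity---the information revealed per query---but yours is the standard, compact Fano/chain-rule machinery that generalizes immediately to other estimation-from-noisy-queries settings, while the paper's potential argument is elementary and self-contained (no entropy needed), makes the ``extra information can only help'' reduction explicit, and plugs directly into an expected-query-count statement without truncation. One shared boundary case worth noting: since each $f_{q,i}$ integrates to $1$ and is bounded by $c$ on $\mathcal{X}$, one always has $c|\mathcal{X}|\ge 1$, and when $c|\mathcal{X}|$ is close to $1$ both bounds correctly blow up; your inequality should be read with this in mind, exactly as the paper's.
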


Theorem \ref{thm:querygame} is difficult to prove directly, because the player's knowledge
is a posterior distribution over the options $\{1,2,\dots,n\}$
which can change in complicated ways. To overcome this obstacle and
prove Theorem \ref{thm:querygame}, we will define an easier `transparent' version of
the noisy query game, where the player's knowledge is
a subset of $\{1,2,\dots,n\}$ representing which options could possibly
be the correct one. Then we will show that even in the easier version
of the game, the player needs many queries in expectation to succeed.

\subsection{The Transparent Noisy Query Game}

In a noisy query game, observations are sampled from probability distribution
functions $f_{q,i}$ on a space $\mathcal{X}$. One way to sample
an observation from $f_{q,i}$ is to sample uniformly from the region
under its graph. Let $G_{q,i}$ denote this region. Note that the
volume of $G_{q,i}$ must be 1, because probabilities
always sum to 1.
In the transparent noisy query game, we answer a query $q$ by telling
the player a point $(x,y)$ uniformly sampled from the graph region
$G_{q,\i}$. In the normal query game the player only learns $x$, so the normal version can only be harder. 

The key question is: How does the player's knowledge update when she receives an observation
$(x,y)$? For any option $i$ whose graph area $G_{q,i}$ does not
include the point $(x,y)$, the player learns that $i$ cannot possibly
be correct. For the rest of the options, the player learns nothing.
This is because observations are sampled uniformly and each $G_{q,i}$
has unit area, so by Bayes' rule the player's posterior on $\i$ remains uniform
over all remaining options. (Here we have assumed that the prior is uniform at the beginning.)

Indeed, this convenient property is the reason we defined this
transparent version: It allows us to easily analyze the player's progress
by tracking only the number of remaining possible options, rather
than the messy details of what happens to the posterior distribution.
\begin{lemma}\label{lem:potential}
Suppose all the $f_{q,i}$ are $c$-non-concentrated distributions. Then in the transparent noisy query game, a query decreases
the logarithm of the number of possible remaining options by at most $\log\left(c|\mathcal{X}|\right)$
in expectation.\end{lemma}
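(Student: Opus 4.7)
The plan is to reexpress the expected drop $\E[\log N - \log N']$ as the differential entropy of an auxiliary density, and then to bound that entropy by the volume of its support. Let $S \subseteq \{1,\dots,n\}$ denote the set of options still consistent with the observations made so far, and write $N = |S|$. The transparent-game analysis preceding the lemma guarantees that the player's posterior on $\i$ is uniform on $S$, so sampling an answer to a query $q$ amounts to first drawing $\i \sim \mathrm{Unif}(S)$ and then $(x,y) \sim \mathrm{Unif}(G_{q,\i})$. Because each $G_{q,i}$ has unit volume, the triple $(\i,x,y)$ is in fact uniform on the disjoint union $\bigsqcup_{i \in S} G_{q,i}$, a region of total volume $N$ sitting inside $S \times \mathcal{X} \times [0,c]$; the $[0,c]$ restriction on the $y$-coordinate is precisely the $c$-non-concentration hypothesis.

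Next I introduce $N'(x,y) := |\{i \in S : (x,y) \in G_{q,i}\}|$, so that the random variable $N'$ appearing in the lemma equals $N'(X,Y)$ for the observed point $(X,Y)$. Integrating out $\i$ from the uniform distribution on $\bigsqcup_{i \in S} G_{q,i}$, the marginal density of $(X,Y)$ on $\mathcal{X} \times [0,c]$ is exactly $p(x,y) = N'(x,y)/N$, which is a bona fide probability density (it integrates to $N/N = 1$). Writing $\log N'(x,y) = \log N + \log p(x,y)$ and taking expectations against $p$ gives
\[
  \E[\log N - \log N'] \;=\; -\int p(x,y)\,\log p(x,y)\,dx\,dy \;=\; h(p),
\]
the differential entropy of $p$.

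To finish, I appeal to the standard maximum-entropy inequality: for any probability density $p$ supported in a set $\Omega$ of finite Lebesgue volume, $h(p) \leq \log |\Omega|$, with equality iff $p$ is uniform on $\Omega$ (equivalently, this is non-negativity of $D_{\mathrm{KL}}(p \Vert \mathrm{Unif}(\Omega))$). Since $p$ is supported in $\mathcal{X} \times [0,c]$, which has volume $c\,|\mathcal{X}|$, we conclude $h(p) \leq \log(c|\mathcal{X}|)$, which is exactly the claimed bound.

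The only non-bookkeeping step is the identity $p(x,y) = N'(x,y)/N$, which converts a combinatorial log-count of surviving options into a genuine density whose entropy can be controlled geometrically. I do not expect further obstacles: the unit-volume property of each $G_{q,i}$ makes the joint distribution uniform, and the $c$-non-concentration hypothesis is exactly what pins the support of $p$ inside a region of volume $c|\mathcal{X}|$, which is how the constant $\log(c|\mathcal{X}|)$ enters the bound.
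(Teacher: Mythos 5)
Your proof is correct and is essentially the paper's argument in information-theoretic dress: your density $p(x,y)=N'(x,y)/N$ is exactly the paper's normalized count $N(x,y)/m$, and the maximum-entropy bound $h(p)\le\log(c|\mathcal{X}|)$ is the same Jensen's inequality step the paper applies to $\int \frac{N}{m}\log N$ under the constraint $\int N=m$. The entropy reformulation is a pleasant repackaging but not a different route.
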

\begin{proof}
Let $m$ be the number of options remaining before the query. For convenience, use the notation $N(x,y)$
for the number of graph areas $G_{q,i}$,
among the $m$ remaining options, that contain $(x,y)$. If the player receives the query result $(x,y)$, they would be left with $N(x,y)$ remaining options. So after the query,
the expected number of options remaining is

\[
\mathbb{E}_{\i}\left[\intop_{G_{q,\i}}\log\left(N(x,y)\right)\mbox{ }dxdy\right],
\]

\noindent where the expectation is taken uniformly at random from among the
$m$ remaining options. Moving the expectation
inside the integral sign and using the assumption that all graph areas
are contained within $\mathcal{X}\times[0,c]$, we get that the above expectation is equal to:

\[
\intop_{\mathcal{X}\times[0,c]}\frac{N(x,y)}{m}\log\left(N(x,y)\right)\text{}dxdy
\]

Since each graph has area 1, the integral $\intop_{\mathcal{X}\times[0,c]}N(x,y)$
is $m$. So by Jensen's inequality, subject to this restriction, the
above quantity is minimized when $N$ is constant over the entire
domain $\mathcal{X}\times[0,c]$. The minimum value is

$$ \intop_{\mathcal{X}\times[0,c]}\frac{m/c|\mathcal{X}|}{m}\log\left(m/c|\mathcal{X}|\right)\text{}dxdy
= \log\left(m/c|\mathcal{X}|\right) = \log m-\log\left(c|\mathcal{X}|\right)$$

\noindent So the expectation of the logarithm of the number of possible options left
decreases by at most $\log\left(c|\mathcal{X}|\right)$, as desired.
\end{proof}

Now we can prove our main lower bound for the noisy query game:
\begin{proof}[Proof of Theorem \ref{thm:querygame}]
Let $n_i$ denote the number of possible remaining options after $i$ steps. Thus we have $n_0 = n$. Now let $X$ be a random variable that represents the cumulative progress the algorithm has made. In particular let
$$X = \sum_{i=1}^T \log n_{i-1} - \log n_{i}$$
Applying Lemma~\ref{lem:potential} and Markov's bound we have that $X \leq 3 \mathbb{E}[X]$ with probability at least $2/3$. If the algorithm succeeds at being able to determine $\i$ after $T$ steps we must have $n_T = 1$. Putting everything together we have
$$0 = \log n_T = \log n - X \geq \log n -3|T| \log (c|\mathcal{X}| )$$
and rearranging completes the proof. 
\end{proof}

\subsection{Proof of the Main Theorem}
\label{sec:mainthm}

Our main result now follows easily from the machinery of noisy query games:

\begin{proof}[Proof of Theorem \ref{thm:main}]
Suppose we want to minimize the function $f(x)=\mbox{dist}(x,x^{\star})^{2}$. This function is $1$-strongly convex and $2r$-smooth within distance $r$ of the origin. (As mentioned earlier, \cite{ferreira2006hessian} shows the eigenvalues of the Hessian are 1 and $r/\tanh{r} \leq r+1$.) 
First we apply the reduction in Comment~\ref{comment:reduct} so that we have $n = e^{\Theta(r)}$ points with pairwise distance at least $r/2$. Moreover $x^{\star}$ is among them and corresponds to the secret number $\i$ in the noisy query game.

In the setting of Theorem \ref{thm:main}, a player makes queries within a certain region of the hyperbolic plane, and learns the (noisy) function value and gradient at their query point.
They are tasked with finding a point within distance $r/5$ of $x^\star$. Because the $n$ points have pairwise distance at least $r/2$, doing so requires figuring out which of the $n$ points is $x^\star$. So the player must win the query game, which by Theorem \ref{thm:querygame}, takes at least $\frac{\log n}{\log(|X|c)}$ queries.

We picked $n=e^{\Theta(r)}$ above, and the value of $c$ is stated in Theorem \ref{thm:main}'s assumptions. But what is $|\mathcal{X}|$, i.e. the volume containing all query answers?
Since the player's queries are restricted to a region in the hyperbolic plane of radius $O(r)$, the true answer to their query is a function value in the interval $[0,O(r^2)]$ along with a gradient in the disk $B(0,O(r)) \subseteq \mathbb{R}^2$. (Recall from Section~\ref{sec:convexity} that the gradient lives in $\mathbb{R}^2$, not the hyperbolic plane.) By assumption, the noise causes error at most $C$, so the observed query answer must lie in $$[-C,O(r^2)+C] \times B(0,O(r)+C) \subset \mathbb{R}^3$$ This is a compact set whose volume is a polynomial in $r$ and $C$. In particular we have $|\mathcal{X}| \leq O(r^4C^3)$. Therefore overall the player needs at least $$\frac{\log n}{\log(|\mathcal{X}|c)}=\frac{r)}{\log(c) + O(\log{r} + \log{C})}$$ queries. This completes the proof. 
\end{proof}

\bibliographystyle{plain}

\appendix

\section{Lower Bounds on the Condition Number}

In Euclidean space, the function $f(x)=||x||^2$ is 1-smooth and 1-strongly convex at every point. However, as we will show, in the hyperbolic plane geodesically convex functions always have a condition number that depends on the radius:

\begin{thm}\label{thm:condition}
If f is a $\beta$-smooth, $\alpha$-strongly convex function defined in a hyperbolic disk of radius $r$, then $\beta/\alpha \geq \Omega(r)$.
\end{thm}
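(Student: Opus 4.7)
The plan is to apply the divergence theorem on the disk $D$ of radius $r$ on which $f$ is defined, exploiting the hyperbolic fact that $\mathrm{Length}(\partial B_r)/\mathrm{Area}(B_r)=\coth(r/2)$ stays bounded below by $1$ as $r$ grows — in sharp contrast to the Euclidean ratio $2/r$, which would make the analogous argument vacuous in flat space.

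To lower bound the outward normal derivative on $\partial D$, let $p$ be the center of $D$ and, for each $q\in\partial D$, let $\gamma_q:[0,r]\to D$ be the radial geodesic from $p$ to $q$. Integrating the strong-convexity inequality $\tfrac{d^2}{dt^2}f(\gamma_q(t))\geq \alpha$ twice gives
$$\partial_\nu f(q)\;\geq\;\langle \nabla f(p),\gamma_q'(0)\rangle + \alpha r,$$
where $\nu(q)=\gamma_q'(r)$ is the outward unit normal. In the hyperbolic polar form $d\rho^2+\sinh^2\rho\,d\theta^2$ around $p$, the boundary arclength element is $\sinh(r)\,d\theta$, so the piece linear in $\nabla f(p)$ averages to zero by rotational symmetry of the unit tangent circle at $p$, and we obtain $\int_{\partial D}\partial_\nu f\,ds \geq \alpha r\cdot \mathrm{Length}(\partial D)$. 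Crucially this does not require $p$ to be the minimum of $f$; any center in the domain works. For the upper bound, $\beta$-smoothness gives the pointwise trace bound $\Delta f=\mathrm{Tr}(\nabla^2 f)\leq 2\beta$, so the Riemannian divergence theorem yields $\int_{\partial D}\partial_\nu f\,ds=\int_D \Delta f\,dA\leq 2\beta\cdot\mathrm{Area}(D)$. Combining and plugging in $\mathrm{Length}(\partial B_r)=2\pi\sinh r$ and $\mathrm{Area}(B_r)=2\pi(\cosh r-1)$ gives
$$\frac{\beta}{\alpha}\;\geq\;\frac{r\cdot \mathrm{Length}(\partial D)}{2\cdot \mathrm{Area}(D)}\;=\;\frac{r}{2}\coth(r/2)\;=\;\Omega(r).$$

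I do not anticipate a major obstacle; the whole argument is essentially calculus on the disk once one commits to applying Stokes' theorem to the vector field $\nabla f$. The only conceptual subtlety is the cancellation of the $\langle \nabla f(p),\gamma_q'(0)\rangle$ correction upon integration over $\partial D$, which is what frees the proof from having to locate a minimum of $f$. This cancellation also makes transparent why the result is genuinely a hyperbolic phenomenon: in Euclidean space $\mathrm{Length}/\mathrm{Area}=2/r$ exactly cancels the factor of $r$ gained from strong convexity and leaves only the trivial $\beta\geq\alpha$, whereas in the hyperbolic plane the corresponding ratio $\coth(r/2)$ does not decay, which is precisely what gives the argument its teeth.
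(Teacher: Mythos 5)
Your proof is correct, but it takes a genuinely different route from the paper. The paper's argument is pointwise and chord-based: it picks a near-boundary point $x$ minimizing $f$ at distance $r-1$ from the center, lower bounds $f(x)$ by $\Omega(\alpha r^2)$ via strong convexity along the radius, upper bounds the average of $f$ at the two endpoints $y,y'$ of the perpendicular geodesic chord through $x$ via $\beta$-smoothness, and then invokes the hyperbolic trigonometric fact (Lemma~\ref{lem:last}) that this chord, which dips only depth $1$ into the disk, has length $O(1)$. Your argument instead integrates: strong convexity along each radial geodesic gives $\partial_\nu f \geq \langle \nabla f(p),\gamma_q'(0)\rangle + \alpha r$ on the boundary circle, the linear term averages to zero because $ds = \sinh(r)\,d\theta$ is constant in $\theta$, and the divergence theorem together with the trace bound $\Delta f \leq 2\beta$ turns this into $\alpha r \cdot \mathrm{Length}(\partial D) \leq 2\beta\cdot \mathrm{Area}(D)$, so that the hyperbolic ratio $\mathrm{Length}/\mathrm{Area} = \coth(r/2) \geq 1$ does all the work. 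What your route buys: it never needs to locate or reason about a minimizer or normalize $f$ at the center, it replaces the hyperbolic law-of-cosines computation with the standard formulas for circumference and area, it produces an explicit constant $\beta/\alpha \geq \tfrac{r}{2}\coth(r/2)$, and it generalizes verbatim to $\mathbb{H}^n$ (trace bound $n\beta$) and, via volume comparison, to other negatively curved spaces; the Euclidean sanity check where the ratio $2/r$ exactly cancels the gain is a nice touch. What the paper's route buys is elementarity — it uses only values of $f$ at finitely many points and no integration, hence no regularity beyond twice differentiability at those points. Two trivial points to tidy: you integrate the inequality $\tfrac{d^2}{dt^2}f(\gamma_q(t))\geq\alpha$ once, not twice, to get the stated normal-derivative bound; and if the disk is open you should run the argument on a slightly smaller concentric disk of radius $r-\epsilon$ (or note that $\beta$-smoothness makes $\nabla f$ Lipschitz, so the divergence theorem applies), which does not affect the $\Omega(r)$ conclusion.
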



\begin{proof}

First we will give the intuition for the proof. Consider a geodesic that dips a distance of 1 into the disk of radius $r$ (see the picture below). On the one hand, due to $\alpha$-strong convexity, the value of $f$ must vary a large amount along this geodesic. But on the other hand, this geodesic is short, so by $\beta$-smoothness $f$ cannot vary much. These two properties will give us a lower bound on the condition number. 

Now we proceed to the formal proof.  Of all points at distance $r-1$ from the center of the disk, let $x$ be one at which $f$ is minimal.

\tikzset{every picture/.style={line width=0.75pt}} 

\begin{center} \begin{tikzpicture}[x=0.75pt,y=0.75pt,yscale=-1,xscale=1]

\draw   (137,108.5) .. controls (137,54.1) and (181.1,10) .. (235.5,10) .. controls (289.9,10) and (334,54.1) .. (334,108.5) .. controls (334,162.9) and (289.9,207) .. (235.5,207) .. controls (181.1,207) and (137,162.9) .. (137,108.5) -- cycle ;
\draw  [draw opacity=0] (327.7,73.22) .. controls (314.44,82.91) and (295.82,80.14) .. (285.97,66.96) .. controls (276.48,54.26) and (278.56,36.49) .. (290.39,26.3) -- (310,49) -- cycle ; \draw  [color={rgb, 255:red, 255; green, 0; blue, 0 }  ,draw opacity=1 ] (327.7,73.22) .. controls (314.44,82.91) and (295.82,80.14) .. (285.97,66.96) .. controls (276.48,54.26) and (278.56,36.49) .. (290.39,26.3) ;
\draw  [draw opacity=0][fill={rgb, 255:red, 255; green, 0; blue, 0 }  ,fill opacity=1 ] (284,66.45) .. controls (284,65.1) and (285.1,64) .. (286.45,64) .. controls (287.8,64) and (288.9,65.1) .. (288.9,66.45) .. controls (288.9,67.8) and (287.8,68.9) .. (286.45,68.9) .. controls (285.1,68.9) and (284,67.8) .. (284,66.45) -- cycle ;
\draw  [draw opacity=0][fill={rgb, 255:red, 0; green, 0; blue, 0 }  ,fill opacity=1 ] (233.05,108.5) .. controls (233.05,107.15) and (234.15,106.05) .. (235.5,106.05) .. controls (236.85,106.05) and (237.95,107.15) .. (237.95,108.5) .. controls (237.95,109.85) and (236.85,110.95) .. (235.5,110.95) .. controls (234.15,110.95) and (233.05,109.85) .. (233.05,108.5) -- cycle ;
\draw  [dash pattern={on 0.84pt off 2.51pt}]  (235.5,108.5) -- (286.45,66.45) ;
\draw  [dash pattern={on 0.84pt off 2.51pt}]  (235.5,108.5) -- (295.9,186.05) ;

\draw (288.5,49.9) node [anchor=north west][inner sep=0.75pt]    {$x$};
\draw (330.5,57.9) node [anchor=north west][inner sep=0.75pt]    {$y$};
\draw (292.5,6.9) node [anchor=north west][inner sep=0.75pt]    {$y'$};
\draw (164.5,70) node [anchor=north west][inner sep=0.75pt]   [align=left] {Distance $\displaystyle r-1$};
\draw (208,147) node [anchor=north west][inner sep=0.75pt]   [align=left] {Radius $\displaystyle r$};

\end{tikzpicture} \end{center}

\noindent Without loss of generality suppose that $f=0$ at the center of the disk. By convexity and the minimality of $x$, we deduce that $$f(y) \geq \frac{r}{r-1}f(x)$$ for all $y$ on the circumference of the disk. By $\alpha$-strong convexity, $f(x) \geq \Omega(\alpha r^2)$. Now draw a geodesic through $x$, as pictured, perpendicular to the geodesic between the center of the disk and $x$. This geodesic intersects the disk at two points $y$ and $y'$. By $\beta$-smoothness, $$\frac{1}{2}(f(y)+f(y')) \leq f(x)+O(\beta d(y,y')^2)$$

\noindent Finally, in hyperbolic geometry, the distance $d(y,y')$ is $O(1)$. See Lemma~\ref{lem:last} for an explicit calculation justifying this. Finally combining the three inequalities in the previous paragraph gives $\beta/\alpha \geq \Omega(r)$, as desired.

\end{proof}

\begin{lemma}\label{lem:last}
A hyperbolic triangle with two sides of length $r$, whose altitude between those sides has length $r-1$, has a third side of length $O(1)$.
\end{lemma}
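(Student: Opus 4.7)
The plan is to reduce this lemma to a single application of the hyperbolic Pythagorean theorem. Denote the triangle by $ABC$ with $|AB|=|AC|=r$, and let $M$ be the foot of the altitude from $A$ to side $BC$. My first step would be to observe that because the triangle is isoceles, reflection across the line $AM$ interchanges $B$ and $C$, so $M$ must be the midpoint of $BC$. This splits the original triangle into two congruent right triangles, each with hypotenuse $r$, one leg of length $r-1$ (the altitude $|AM|$), and the other leg of length $|BC|/2$.

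Next, I would invoke the hyperbolic Pythagorean theorem (see e.g.\ \cite{janson2015euclidean}): for a hyperbolic right triangle with legs $a,b$ and hypotenuse $c$, one has $\cosh(c) = \cosh(a)\cosh(b)$. Applied to the right triangle $ABM$ with $a=r-1$, $c=r$, and $b=|BC|/2$, this yields
\[
\cosh\!\left(|BC|/2\right) \;=\; \frac{\cosh(r)}{\cosh(r-1)}.
\]
Using $\cosh(t) = (e^t+e^{-t})/2$, the right-hand side tends to $e$ as $r\to\infty$ and is uniformly bounded by an absolute constant for all $r\geq 1$. Inverting $\cosh$ on both sides gives $|BC|/2 = O(1)$, and hence $|BC| = O(1)$ as claimed.

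There is really no significant obstacle here; the only minor point to check is that $M$ lies strictly between $B$ and $C$ rather than on an extension of the segment, but this is automatic from the isoceles symmetry (the altitude from the apex of a hyperbolic isoceles triangle always lands on the base). The hyperbolic Pythagorean theorem is standard, and the asymptotic $\cosh(r)/\cosh(r-1) \to e$ is an elementary one-line calculation.
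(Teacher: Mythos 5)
Your proof is correct, and it takes a genuinely different (and cleaner) route than the paper. The paper works with the whole isoceles triangle: it writes the hyperbolic law of cosines for a base angle $A$, $\cos A = \frac{(-1+\cosh c)\cosh r}{\sinh c\,\sinh r}$, combines it with the altitude relation $\sin A = \frac{\sinh(r-1)}{\sinh r}$ via $\sin^2 A + \cos^2 A = 1$, and then argues asymptotically: as $r\to\infty$ the resulting equation for $c$ has a solution $\approx 3.31$, hence $c=O(1)$. You instead bisect the triangle along the altitude (justified by the reflection symmetry, as you note) and apply the hyperbolic Pythagorean theorem to one half, obtaining the closed form $\cosh(c/2)=\cosh(r)/\cosh(r-1)$. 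This buys you two things: the angle never appears, so there is no elimination step, and the bound is uniform and explicit rather than asymptotic --- since $\cosh(r)\le e\cosh(r-1)$ for all $r\ge 1$, you get $c \le 2\,\mathrm{arccosh}(e)\approx 3.31$ for every $r$, matching the paper's limiting value but without the limit argument. The only implicit hypotheses (that $r\ge 1$ so the altitude length $r-1$ makes sense, and that the foot of the altitude is the midpoint of the base) are exactly as in the paper's geometric setup, so there is no gap.
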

\begin{proof}
This is a straightforward calculation using formulas from \cite{janson2015euclidean}. Let $c$ denote the length of the third side and $A$ denote the measure of either angle adjacent to side $c$. (Those angles are equal because the triangle is isoceles.)
The hyperbolic law of cosines from \cite{janson2015euclidean} gives $\cos A = \frac{(-1+\cosh c)\cosh r}{\sinh c \sinh r}$. The altitude length formula gives $\sin A = \frac{\sinh (r-1)}{\sinh r}$. Using $1-\sin ^2=\cos^2$ gives:
\[
1-\frac{\sinh^2 (r-1)}{\sinh^2 r} = \frac{(-1+\cosh c)^2\cosh^2 r}{\sinh^2 c \sinh^2 r}
\]
Rearranging the above expression, we get:
\[
\left(1-\frac{\sinh^2 (r-1)}{\sinh^2 r}\right)\frac{\sinh^2 r}{\cosh^2 r} = \frac{(-1+\cosh c)^2}{\sinh^2 c}
\]
As $r\to \infty$ the left-hand side tends to $1-\frac{1}{e^2} \approx 0.86$. Then solving the right-hand side for $c$ gives a solution, unique in the reals, that is approximately $3.31$, which is $O(1)$ as desired.
\end{proof}
\end{document}